\newcommand{\newaliastheorem}[2]{%
  \newaliascnt{#1}{theorem}
  \newtheorem{#1}[#1]{#2}
  \aliascntresetthe{#1}
  \expandafter\def\csname #1autorefname\endcsname{#2}
}
\theoremstyle{plain}
\newtheorem{theorem}{Theorem}
\newtheorem*{fact}{Fact}
\newcommand{\tuple}[1]{\left\langle #1 \right\rangle}
\newcommand{\Ord}{\mathrm{Ord}}
\DeclareMathOperator{\dom}{dom}
\DeclareMathOperator{\TC}{TC}
\newcommand{\tc}[1]{\TC\mleft(\left\{ #1 \right\}\mright)}
\DeclareMathOperator{\dd}{def}
\title{Antichain of ordinals in intuitionistic set theory}
\keywords{Intuitionistic, set theory, ordinal arithmetic, constructible universe}
\subjclass{Primary: 03E70; Secondary: 03E10, 03F55}
\author{Shuwei Wang}
\address{School of Mathematics\\
University of Leeds\\
Leeds LS2 9JT, UK}
\email{mmsw@leeds.ac.uk}
\begin{document}

\begin{abstract}
  In classical set theory, the ordinals form a linear chain that we often think of as a very thin portion of the set-theoretic universe. In intuitionistic set theory, however, this is not the case and there can be incomparable ordinals. In this paper, we shall show that starting from two incomparable ordinals, one can construct canonical bijections from any arbitrary set to an antichain of ordinals, and consequently any subset of the given set can be defined using ordinals as parameters. This implies the surprising result that in the theory ``$\mathrm{IKP} + {}$there exist two incomparable ordinals'', the statements $\Ord \subseteq L$ and $V = L$ are equivalent.
\end{abstract}

\maketitle

\section{Introduction}

This paper aims to report on a canonical way of constructing arbitrarily sized antichains of ordinals in intuitionistic set theory, developed during research on the behaviours of G\"odel's constructible universe $L$ in intuitionistic contexts. As surveyed by Lubarsky \cite{lubarsky93-intuitionistic-l} and more recently by Matthews and Rathjen \cite{matthews-rathjen24-constructible-universe}, the situation for $L$ is much more complicated than the classical case and, notably, it still remains open whether any intuitionistic set theory can prove the statement $\Ord \subseteq L$. In this paper, we present some results about intuitionistic ordinals that will suggest that this may not be as innocent a claim as it seems to be.

In classical set theory, the class $\Ord$ of \emph{ordinals}, i.e.\ transitive sets of transitive sets, satisfies the trichotomy
\[\forall \alpha, \beta \in \Ord \left(\alpha \in \beta \lor \alpha = \beta \lor \beta \in \alpha\right).\]
However, the same is not true in intuitionistic set theory. For example, if $\varphi$ is a formula such that the instance of excluded middle $\varphi \lor \neg \varphi$ does not hold, then none of the three disjuncts above can hold for the assignment $\alpha \colonequals 1$, $\beta \colonequals \left\{x \in 1 : \varphi\right\}$.

We will say that two ordinals $\alpha, \beta$ are \emph{incomparable} when the negation of this trichotomy holds, or more concisely written as
\[\alpha \perp \beta \quad \colonequals \quad \neg \alpha \in \beta^+ \land \neg \beta \in \alpha^+,\]
where $\alpha^+ = \alpha \cup \left\{\alpha\right\}$ denotes its ordinal \emph{successor}. We shall start by justifying formally that such ordinals may indeed exist in intuitionistic set theory\footnote{There are various other ways to justify the consistency of incomparable ordinals in different base theories such as $\mathrm{IKP}$ or $\mathrm{CZF}$. For example, the realisability model in Rathjen \cite{rathjen06-czf-realizability} contains plenty of incomparable ordinals. Here, we will present a construction that is most relevant to the final section of this paper.}:

\begin{proposition}
  \label{prop:perp-consistent}
  Assuming that $\mathrm{IZF}$ is consistent, then so is the theory
  \[\mathrm{IZF} + \exists \alpha, \beta \in \Ord \ \alpha \perp \beta.\]
\end{proposition}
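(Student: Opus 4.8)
The plan is to pass to a topological (equivalently, Heyting-valued) model $V^{(\mathcal{O}(X))}$ of $\mathrm{IZF}$ and exhibit in it two incomparable ordinals; since the statement ``$V^{(\mathcal{O}(X))}\models\mathrm{IZF}$'' is provable in $\mathrm{IZF}$ itself, this yields the stated relative consistency. Write $\|\varphi\|\in\mathcal{O}(X)$ for the Heyting value of $\varphi$ and $\widehat{\,\cdot\,}$ for canonical names. Unwinding the abbreviation, $\alpha\perp\beta$ is equivalent to $\neg(\alpha\in\beta)\wedge\neg(\alpha=\beta)\wedge\neg(\beta\in\alpha)$, so the goal is to find names $A,B$ for which all three of $\|A\in B\|$, $\|A=B\|$, $\|B\in A\|$ are $\emptyset$. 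This is genuinely more than making trichotomy fail: the introduction's pair $\alpha\colonequals 1$, $\beta\colonequals\{x\in 1:\varphi\}$ only leaves the comparison undecided, since that $\beta$ is a subset of $\widehat 1$ and so is forced to be ``almost'' $\widehat 0$ or $\widehat 1$, which keeps $\|\beta\in\alpha^{+}\|$ dense and nonempty.

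The idea is to take $A$ and $B$ to be ``cuts'' strictly below $\widehat\omega$ that are \emph{nowhere} decidably equal to a standard ordinal. Concretely, take $X=\mathbb{R}$, fix a bijection $n\mapsto q_n$ between $\omega$ and $\mathbb{Q}$, and set $a_n\colonequals\mathbb{R}\setminus\{q_k:k<n\}$, a $\subseteq$-descending sequence of dense open sets with $a_0=\mathbb{R}$ and every pseudocomplement $\neg a_n=\emptyset$ (each $\mathbb{R}\setminus a_n$ being finite). Let $A$ be the name with domain $\{\widehat n:n\in\omega\}$ and values $A(\widehat n)=a_n$, and $B$ the name with $B(\widehat n)=a_{n+1}$. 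Since the defining sequences are $\subseteq$-descending, both $A$ and $B$ are forced to be transitive sets of (forced-)transitive sets, that is, ordinals.

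Now a short computation in $\mathcal{O}(\mathbb{R})$ finishes the job. Since $\neg a_n=\emptyset$ for every $n$, one has $\|A=\widehat m\|\le\|A\subseteq\widehat m\|\le\neg a_m=\emptyset$, and likewise $\|B=\widehat m\|=\emptyset$, for every $m$; hence $\|A\in B\|=\bigvee_m\bigl(B(\widehat m)\wedge\|A=\widehat m\|\bigr)=\emptyset$ and symmetrically $\|B\in A\|=\emptyset$ --- neither cut can be \emph{witnessed} as a member of any ordinal, precisely because it is never forced to be a standard ordinal. Finally $\|A=B\|=\bigwedge_n\bigl(a_n\leftrightarrow a_{n+1}\bigr)=\bigwedge_n\bigl(\mathbb{R}\setminus\{q_n\}\bigr)=\operatorname{int}\bigl(\mathbb{R}\setminus\mathbb{Q}\bigr)=\emptyset$, using that $\mathbb{Q}$ is dense. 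Therefore $\|A\in B^{+}\|=\|B\in A^{+}\|=\emptyset$, i.e.\ $\|A\perp B\|=\mathbb{R}$, as required. (Here $\|B\subseteq A\|=\bigwedge_n(a_{n+1}\to a_n)=\mathbb{R}$, so $B$ is even a forced \emph{subset} of $A$ --- a neat illustration that, intuitionistically, a subset of an ordinal need not be a member of its successor.)

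I expect the only real difficulty to be the initial design choice: seeing that the two ordinals must be cuts that ``outrun every standard ordinal'' --- forcing all the $\neg a_n$ to $\emptyset$, which is what blocks either cut from ever being a witnessed member of anything --- and that deleting a \emph{dense} set of points from $X$ one at a time is exactly what drives $\|A=B\|$ down to $\emptyset$. Once these are in place the rest is a mechanical check of the forcing clauses for $\|{\cdot}\|$, plus the standing point that ``$V^{(\mathcal{O}(X))}\models\mathrm{IZF}$'' is itself proved intuitionistically, so that the argument really gives the relative-consistency implication and not just a classical metatheorem.
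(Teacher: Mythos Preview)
Your proof is correct and genuinely different from the paper's. The paper argues by contradiction: if $\mathrm{IZF}$ refuted the existence of incomparable ordinals, then for every formula $\psi$ the ordinal $\alpha_\psi = 2 \cup \{\beta \in \mathcal{P}(1) : \exists \vec{x}\,(0\in\beta \leftrightarrow \psi(\vec{x}))\}$ would satisfy $\neg\neg(\alpha_\psi = 2)$, forcing $\mathrm{IZF}$ to prove every doubly-negated instance of excluded middle; this contradicts the known consistency of $\mathrm{IZF}$ with anti-classical axioms (via realisability). Your route is instead a direct semantic construction: in the topological model $V^{(\mathcal{O}(\mathbb{R}))}$ you build two explicit ordinal names $A,B$ as descending ``cuts'' obtained by deleting an enumeration of $\mathbb{Q}$ one point at a time, and verify $\|A\in B\| = \|B\in A\| = \|A=B\| = \emptyset$ by straightforward Heyting-algebra calculations (the key points being that each $\neg a_n = \emptyset$ since finitely many rationals have empty interior, and $\bigwedge_n(\mathbb{R}\setminus\{q_n\}) = \operatorname{int}(\mathbb{R}\setminus\mathbb{Q}) = \emptyset$ by density). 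Your argument is more concrete and self-contained --- one actually \emph{sees} the two ordinals --- and your aside that $\|B\subseteq A\| = \mathbb{R}$ yet $\|B\in A^+\| = \emptyset$ is a nice illustration of the intuitionistic pathology. The paper's indirect argument, on the other hand, is chosen deliberately: the same $\alpha_\psi$ trick is recycled in the final corollary to connect $V\neq L$ with $\Ord\subseteq L$, so its proof of this proposition doubles as setup for that later result. The paper's footnote explicitly acknowledges that direct model constructions like yours exist.
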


\begin{proof}
  Suppose otherwise, that is, $\mathrm{IZF} \vdash \neg \exists \alpha, \beta \in \Ord \ \alpha \perp \beta$. Specifically, for an arbitrary formula $\psi\mleft(x_1, \ldots, x_n\mright)$ with all free variables listed, we can consider the set
  \[\alpha_\psi = 2 \cup \left\{\beta \in \mathcal{P}\mleft(1\mright) : \exists x_1, \ldots, x_n \left(0 \in \beta \leftrightarrow \psi\mleft(x_1, \ldots, x_n\mright)\right)\right\}.\]
  It is easy to see that
  \[\mathrm{IZF} \vdash \alpha_\psi \in \Ord \land \neg \alpha_\psi \in 2 \land \neg 2 \in \alpha_\psi.\]
  Additionally, for any $x_1, \ldots, x_n$ there exists a set
  \[\beta = \left\{\gamma \in 1 : \psi\mleft(x_1, \ldots, x_n\mright)\right\} \in \alpha_\psi\]
  by comprehension in $\mathrm{IZF}$, i.e.\ $0 \in \beta \leftrightarrow \psi\mleft(x_1, \ldots, x_n\mright)$ and thus $\beta \in 2$ if and only if $\beta = 0 \lor \beta = 1$, if and only if
  \[\neg \psi\mleft(x_1, \ldots, x_n\mright) \lor \psi\mleft(x_1, \ldots, x_n\mright).\]
  In other words,
  \[\mathrm{IZF} \vdash \alpha_\psi = 2 \leftrightarrow \forall x_1, \ldots, x_n \left(\psi\mleft(x_1, \ldots, x_n\mright) \lor \neg \psi\mleft(x_1, \ldots, x_n\mright)\right).\]
  By our assumption, we now have $\mathrm{IZF} \vdash \neg \alpha_\psi \perp 2$. Then we must have $\mathrm{IZF} \vdash \neg \neg \alpha_\psi = 2$ and thus the corresponding double negated instance of excluded middle:
  \[\mathrm{IZF} \vdash \neg \neg \forall x_1, \ldots, x_n \left(\psi\mleft(x_1, \ldots, x_n\mright) \lor \neg \psi\mleft(x_1, \ldots, x_n\mright)\right).\]

  However, this is impossible. We say that $\varphi$ is an \emph{anti-classical axiom} if $\mathrm{ZF} \vdash \neg \varphi$, then it is well-known that $\mathrm{IZF}$ is consistent with many anti-classical axioms. For example, see the realisability model in \cite{chen-rm-rathjen12-lifschitz-realizability} interpreting such axioms like Church's thesis, etc. Now, let $\varphi$ be any such anti-classical axiom and $\chi_1, \ldots, \chi_k$ be an enumeration of the necessary instances of excluded middle such that
  \[\mathrm{IZF} + \chi_1 + \cdots + \chi_k \vdash \neg \varphi,\]
  then by the analysis above we must have $\mathrm{IZF} \vdash \neg \neg \chi_i$ for all $i$ and consequently $\mathrm{IZF} \vdash \neg \neg \neg \varphi$. This is a contradiction to the consistency of $\varphi$ and thus our initial assumption must be false.
\end{proof}

In the main portion of this paper, we shall proceed to show that such incomparable ordinals are highly pathological objects --- this is probably hinted at in Lubarsky's recent survey \cite{lubarsky23-inner-outer-models-cst}, section 6, where he claimed ``there are [unpublished] constructions showing that it is not hard to get an arbitrary set into $L$ by coding it into an ordinal'' --- and we shall make it precise here that, from merely a pair of two incomparable ordinals, one can construct an injection $f$ from an arbitrarily large domain to an \emph{antichain} of ordinals, that is, for any $x \neq y \in \dom\mleft(f\mright)$, we will have $f\mleft(x\mright) \perp f\mleft(y\mright)$.

Additionally, we construct this map $f$ as a canonical $\Sigma$-definable object in the language of set theory. It induces a method to uniformly encode any subsets of its domain as ordinals and is now even preserved in the constructible universe. We shall utilise this encoding to prove the following result, which demonstrates how intuitionistic $L$ behaves vastly different from its classical counterpart:

\begin{theorem}[$\mathrm{IKP}$]
  \label{thm:main-equiv}
  Assume there exist $\alpha \perp \beta \in \Ord$, then the following are equivalent:
  \begin{itemize}
    \item $\Ord \subseteq L$,
    \item $V = L$.
  \end{itemize}
\end{theorem}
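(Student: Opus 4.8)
The plan is to handle the two implications separately. That $V = L$ implies $\Ord \subseteq L$ is immediate, since every ordinal is a set. For the converse I would run an $\in$-induction: assuming $\Ord \subseteq L$, I show that any set $a$ with $a \subseteq L$ satisfies $a \in L$, which by the $\in$-induction scheme of $\mathrm{IKP}$ yields $V = L$. To do this I encode $a$ into a single ordinal by applying the canonical $\Sigma$-definable antichain map $f$ of the preceding sections to the domain $a$; write $A \colonequals f[a]$ for the resulting antichain of ordinals, a set by $\Sigma$-replacement. Then $a$ is coded by
\[ e \colonequals \bigcup A \cup A, \]
which is an ordinal because $\bigcup A$ is an ordinal and $\gamma \subseteq \bigcup A$ for each $\gamma \in A$; since $\Ord \subseteq L$ we get $e \in L$.

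The key is then a decoding identity, provable already in $\mathrm{IKP}$: $A = \left\{ \gamma \in e : \neg\, \gamma \in \bigcup e \right\}$. For the left-to-right inclusion, if $\gamma, \gamma' \in A$ with $\gamma \in \gamma'$ then $\gamma$ and $\gamma'$ are distinct, so $\gamma \perp \gamma'$, contradicting $\gamma \in \gamma'$; hence no member of $A$ lies in $\bigcup A = \bigcup e$. The reverse inclusion uses $e = \bigcup A \cup A$ and the constructive reading of binary union. Since the right-hand side is a $\Delta_0$ class term and $\mathrm{IKP}$ proves that $L$ again models $\mathrm{IKP}$, with $\Ord^L = \Ord$ (ordinalhood is $\Delta_0$-absolute and, by hypothesis, every ordinal lies in $L$), evaluating it inside $L$ returns exactly $A$; so $A \in L$ by $\Delta_0$-separation in $L$.

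It remains to reconstruct $a$ inside $L$. As $f$ is $\Sigma$-definable from parameters $\alpha, \beta \in L$, and $\alpha \perp \beta$ is a $\Pi_1$ statement which descends to $L$, the copy of $f$ computed inside $L$ agrees with $f$ on $L$. Every $\gamma \in A$ has its $f$-preimage in $a$, hence in $L$ by the induction hypothesis, so inside $L$ the partial inverse of $f$ restricted to the set $A$ is a total function on $A$, and it is a set of $L$ by $\Sigma$-collection together with $\Delta_0$-separation; its range is $a$, so $a \in L$, completing the induction. I expect the real work to sit exactly in this absoluteness bookkeeping: one must confirm that $\mathrm{IKP}$ proves $L \models \mathrm{IKP}$ in a form supporting these separations and collections, that the $\Sigma$-definable machinery of $f$ and of the subset-encoding relativises to $L$ so that $L$'s decoding of $e$ genuinely returns $A$ and the genuine preimages, and that all of this --- the decoding identity in particular --- goes through without excluded middle. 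Should $f$ turn out to require its domain as a parameter rather than being globally $\Sigma$-definable, I would instead code the whole structure $\tuple{\tc{a}, \in}$ by finitely many ordinals --- applying the subset-encoding to $\tc{a}$ and to $\tc{a} \times \tc{a}$ --- and reconstruct $\tc{a}$ inside $L$ as the transitive collapse of the decoded well-founded extensional relation, using the absoluteness of transitive collapses.
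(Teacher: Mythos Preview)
Your overall plan coincides with the paper's: the same $\in$-induction, the same encoding ordinal (your $e=\bigcup A\cup A$ is literally the paper's $\tau=\bigcup_{y\in a}f(y)^+$), and the same reliance on $L\vDash\mathrm{IKP}$ for the decoding step. Your decoding identity $A=\{\gamma\in e:\neg\,\gamma\in\bigcup e\}$ is correct (note $\bigcup e=\bigcup A$ since ordinals are transitive), though the paper takes a slightly different final step: instead of first extracting $A$ and then inverting $f$ via $\Sigma$-collection in $L$, it uses the induction hypothesis to fix a bound $a\subseteq L_\sigma$ and forms $\{t\in L_\sigma: t\in\tc{t}_\gamma\wedge f^L_{t,\gamma}(t)\in\tau\}$ in one separation, showing this equals $a$ directly via \autoref{prop:pairwise-perp-powerset-ord}.

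The real gap is precisely the point you flag but do not resolve. The antichain map built in the paper is $f_{x,\gamma}$, defined by recursion along the hierarchy $\tc{x}_\delta$, so it \emph{does} carry the domain $x$ as a parameter; your sentence ``$f$ is $\Sigma$-definable from parameters $\alpha,\beta\in L$'' is not justified as written, and without it your inversion step inside $L$ is circular ($L$ would need $a$ to compute $f_{a,\gamma}$). The paper closes this with a dedicated lemma (\autoref{lem:tc-hier-and-f-agree}): $f_{x,\gamma}(y)=f_{y,\gamma}(y)$ whenever $y\in\tc{x}_\gamma$, so the class function $y\mapsto f_{y,\gamma}(y)$ is $\Sigma$-definable from $\alpha,\beta,\gamma$ alone and agrees with $f_{a,\gamma}$ on $a$. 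This is also what yields the \emph{global} injectivity you need to invert (for arbitrary $t,y$ one passes to $p=\{t,y\}$ and uses that $f_{p,\gamma}$ is pairwise incomparable). Your fallback via coding $\langle\tc{a},\in\rangle$ and Mostowski-collapsing inside $L$ can in fact be made to work --- once $A'=f[\tc{a}]$ is recovered in $L$ by your $f$-free decoding identity, the $\in$-relation can be encoded using a pairwise incomparable map on $A'\times A'$ built from $\alpha,\beta$ alone via \autoref{cor:ord-pair} --- but this is considerably heavier than the one-line domain-independence lemma the paper actually uses.
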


One immediate consequence of this, as we shall comment at the end, is that the consistency of any theory $\mathrm{IZF} + \varphi + V \neq L$, where $\varphi$ is any anti-classical axiom, will imply $\mathrm{IZF} \nvdash \Ord \subseteq L$, and will resolve the long-standing open problem mentioned above.

\section{Constructing antichains}

A very weak intuitionistic set theory, $\mathrm{IKP}$, suffices for our construction, which means the same results easily apply to any intuitionistic extensions such as $\mathrm{CZF}$ or $\mathrm{IZF}$.  Essentially, the construction uses the fact that $\mathrm{IKP}$ proves $\Sigma$-recursion and also separation by any $\Delta_0$-formula containing $\Sigma$-definable class functions. We also assume that $\mathrm{IKP}$ here includes the axiom of strong infinity, which is needed for the existence of transitive closures that will become convenient later.

Like in a classical set theory, we formally say that a set $\alpha$ is an \emph{ordinal} if it is a transitive set of transitive sets; the class of all ordinals is denoted $\Ord$. We cannot intuitionistically classify all ordinals into the cases of zero, successors and limits. Thus, to obtain the addition operator $\alpha + \cdot : \Ord \rightarrow \Ord$, we need to modify the recursive definition to use a uniform clause. We let
\[\alpha + \beta = \alpha \cup \bigcup_{\gamma \in \beta} \left(\alpha + \gamma\right)^+,\]
which is common practice in intuitionistic contexts such as in \cite{aczel-rathjen10-cst-book}.

To begin, we shall verify that a few basic classical properties of ordinal addition still hold in intuitionistic set theory. $\mathrm{IKP}$ uses the weaker axiom of set induction instead of foundation, but it still forbids self-containing sets:

\begin{lemma}
  \label{lem:no-in-self}
  There is no set $x$ such that $x \in x$.
\end{lemma}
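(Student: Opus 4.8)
The plan is to invoke the set-induction schema, which $\mathrm{IKP}$ adopts in place of the classical foundation axiom. I would apply it to the formula $\varphi(x) \colonequals \neg\, x \in x$; since this formula is $\Delta_0$, even a bounded fragment of set induction suffices, so there is no concern about the strength of the schema being used.

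For the induction step, fix a set $x$ and assume the inductive hypothesis $\forall y \in x\, \neg\, y \in y$; the goal is $\neg\, x \in x$. Proving this negation directly, suppose $x \in x$. Then $x$ is itself one of the elements of $x$, so instantiating the inductive hypothesis at $y \colonequals x$ yields $\neg\, x \in x$, which together with the assumption $x \in x$ gives a contradiction. Hence $\neg\, x \in x$, completing the induction step, and set induction then delivers $\forall x\, \neg\, x \in x$, equivalently $\neg \exists x\, x \in x$.

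I do not expect any genuine obstacle here: the argument is essentially a one-line application of set induction. The only points worth flagging are that this lemma really does rely on set induction — it is not a consequence of the other axioms of $\mathrm{IKP}$, mirroring the fact that $x \notin x$ fails in $\mathrm{ZF}$ without foundation — and that the reasoning is constructive throughout, since the negation $\neg\, x \in x$ is obtained by deriving falsity from its hypothesis rather than through any appeal to excluded middle. In particular, we deliberately avoid the classical route of taking an $\in$-minimal element of $\left\{x\right\}$, which would awkwardly involve a non-emptiness hypothesis in the intuitionistic setting.
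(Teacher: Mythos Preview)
Your proof is correct. Both you and the paper invoke set induction, but on different $\Delta_0$ formulas: you induct directly on $\varphi(y) \colonequals \neg\, y \in y$, whereas the paper first supposes an $x$ with $x \in x$ exists and then applies set induction to $\psi(y) \colonequals y \neq x$, obtaining $\forall y\ y \neq x$ and hence a contradiction from $x = x$. Your route is marginally more direct, proving the universal statement outright without the enclosing reductio; the paper's version makes the offending set an explicit parameter of the induction. Both are one-step applications of set induction and both are constructively valid, so the difference is purely cosmetic.
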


\begin{proof}
  Suppose that such an element $x$ exists. Observe that for any set $y$, $y = x$ implies $x \in y$, i.e.\ $\exists z \in y \ z = x$. Thus, $\forall z \in y \ \neg z = x$ implies $\neg y = x$. By set induction, we must have $\forall y \ \neg y = x$, which is a contradiction.
\end{proof}

\begin{lemma}
  \label{lem:ord-plus-not-in-original}
  For any $\alpha, \beta \in \Ord$, we have $\neg \alpha + \beta \in \alpha$.
\end{lemma}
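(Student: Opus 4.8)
The plan is to derive a contradiction directly from the assumption $\alpha + \beta \in \alpha$, with no induction on $\beta$ at all. The one structural fact I need about the addition operator is read straight off its defining equation: since $\alpha + \beta = \alpha \cup \bigcup_{\gamma \in \beta} \left(\alpha + \gamma\right)^+$, we have $\alpha \subseteq \alpha + \beta$. This is immediate and uses nothing about whether $\alpha + \beta$ is itself an ordinal.

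Now suppose towards a contradiction that $\alpha + \beta \in \alpha$. Combining this with the inclusion $\alpha \subseteq \alpha + \beta$ yields $\alpha + \beta \in \alpha + \beta$, which contradicts \autoref{lem:no-in-self}. Hence the assumption is untenable and $\neg\, \alpha + \beta \in \alpha$, as required. Note that this argument is legitimately intuitionistic precisely because the statement to be proved is itself a negation, so assuming the positive instance and deriving $\bot$ is permitted.

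The only points needing care are bookkeeping. We must know that $\alpha + \beta$ is a genuine set for the argument to make sense; this is exactly where $\mathrm{IKP}$'s $\Sigma$-recursion is used, as noted in the preamble to this section, and it is also what makes the recursive clause above a legitimate definition of the $\Sigma$-definable class function $\alpha + \cdot$. One should resist the temptation to imitate the classical proof by induction on $\beta$: intuitionistically that would drag in case splits on $\gamma \in \beta$ versus $\gamma = \beta$ and would require $\alpha + \gamma \in \Ord$, none of which is available or needed here. I do not anticipate any genuine obstacle — the content of the lemma is just that $\alpha$ is a subset of $\alpha + \beta$, together with \autoref{lem:no-in-self}.
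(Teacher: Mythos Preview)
Your proof is correct and is essentially identical to the paper's own argument: assume $\alpha + \beta \in \alpha$, use the inclusion $\alpha \subseteq \alpha + \beta$ coming straight from the defining clause $\alpha + \beta = \alpha \cup \bigcup_{\gamma \in \beta} (\alpha + \gamma)^+$, obtain $\alpha + \beta \in \alpha + \beta$, and invoke \autoref{lem:no-in-self}. Your additional remarks about intuitionistic validity and avoiding induction on $\beta$ are accurate but extraneous to the mathematics; the paper dispatches the lemma in a single line.
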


\begin{proof}
  Suppose that $\alpha + \beta \in \alpha$, then $\alpha + \beta \in \alpha \cup \bigcup_{\gamma \in \beta} \left(\alpha + \gamma\right)^+ = \alpha + \beta$, contradicting \autoref{lem:no-in-self}.
\end{proof}

\begin{lemma}
  \label{lem:ord-plus-inj}
  For any $\alpha, \beta, \gamma \in \Ord$, we have $\alpha + \beta \in \alpha + \gamma \rightarrow \beta \in \gamma$ and $\alpha + \beta = \alpha + \gamma \rightarrow \beta = \gamma$.
\end{lemma}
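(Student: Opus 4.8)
The plan is to reduce both claims to a single ``normal form'' fact about ordinal addition: for $\alpha, \gamma \in \Ord$, every element of $\alpha + \gamma$ either already belongs to $\alpha$ or is of the form $\alpha + \delta$ for some $\delta \in \gamma$. Granting this, the membership claim is almost immediate: if $\alpha + \beta \in \alpha + \gamma$ then, since $\neg\, \alpha + \beta \in \alpha$ by \autoref{lem:ord-plus-not-in-original}, the normal form forces $\alpha + \beta = \alpha + \delta$ for some $\delta \in \gamma$, so it only remains to know that $\alpha + \cdot$ is injective, which is exactly the second claim --- hence I would prove the injectivity statement first and derive the membership statement from it.

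Two preliminary observations are needed. First, monotonicity: if $\delta \in \gamma$ then $\left(\alpha + \delta\right)^+$ is one of the sets whose union defines $\alpha + \gamma$, so $\alpha + \delta \in \left(\alpha + \delta\right)^+ \subseteq \alpha + \gamma$; this needs no induction. Second, the normal form itself, which I would prove by set induction on $\gamma$: unfolding the defining clause, an element $x$ of $\alpha + \gamma$ lies in $\alpha$ or, for some $\delta \in \gamma$, lies in $\left(\alpha + \delta\right)^+$, so either $x = \alpha + \delta$ (and we are done) or $x \in \alpha + \delta$, in which case the induction hypothesis at $\delta$ together with transitivity of $\gamma$ rewrites $x$ into the required shape.

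For injectivity I would prove $\alpha + \beta = \alpha + \gamma \to \beta = \gamma$ for all $\alpha, \gamma \in \Ord$ by set induction on $\beta$. To see $\beta \subseteq \gamma$, take $\epsilon \in \beta$: monotonicity gives $\alpha + \epsilon \in \alpha + \beta = \alpha + \gamma$, so by the normal form and \autoref{lem:ord-plus-not-in-original} there is $\nu \in \gamma$ with $\alpha + \epsilon = \alpha + \nu$, and applying the induction hypothesis at $\epsilon$ to this equation yields $\epsilon = \nu \in \gamma$. The inclusion $\gamma \subseteq \beta$ is symmetric: for $\nu \in \gamma$ we likewise obtain $\epsilon \in \beta$ with $\alpha + \nu = \alpha + \epsilon$, and the induction hypothesis at that $\epsilon$ gives $\nu = \epsilon \in \beta$. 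Thus $\beta = \gamma$, and the membership claim follows as indicated in the first paragraph.

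The delicate point is the injectivity step. The equation $\alpha + \epsilon = \alpha + \nu$ is symmetric in $\epsilon$ and $\nu$, but the recursion runs only on $\beta$, and $\nu$ is an a priori unbounded ordinal lying in the unrelated ordinal $\gamma$; what rescues the argument is that the induction hypothesis obtained at $\epsilon \in \beta$ is universally quantified over the \emph{second} summand, so it applies to $\nu$ however large $\nu$ is. This is also the reason for proving the normal-form lemma first: without it one would be left with the alternative $\alpha + \epsilon \in \alpha + \nu$ rather than an equation, which would force an additional nested induction on $\gamma$; with it, the disjunction collapses to the clean equality that the $\beta$-induction can dispatch.
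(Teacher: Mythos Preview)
Your proof is correct, but it is organised differently from the paper's. The paper proves the two implications \emph{simultaneously} by set induction on the pair $\beta, \gamma$: from $\alpha + \beta \in \alpha + \gamma$ it unfolds the definition to get $\alpha + \beta \in (\alpha + \delta)^+$ for some $\delta \in \gamma$, then appeals to the induction hypotheses of \emph{both} claims at $(\beta,\delta)$ to conclude $\beta \in \delta \lor \beta = \delta$; for the equality claim it uses the induction hypothesis of the \emph{membership} claim at $(\delta,\gamma)$ with $\delta \in \beta$. Thus the two statements genuinely feed into each other and the simultaneous induction is not an accident.

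You instead decouple the two by isolating the normal-form lemma (every element of $\alpha + \gamma$ lies in $\alpha$ or equals some $\alpha + \delta$ with $\delta \in \gamma$), proved once by induction on $\gamma$. With this in hand, injectivity needs only a single induction on $\beta$, and the membership implication becomes a corollary requiring no further induction at all. What you buy is modularity and a cleanly reusable auxiliary fact; what the paper buys is brevity, since it never states the normal form separately. Your closing paragraph correctly identifies the trade-off: the paper accepts exactly the ``nested'' (simultaneous) induction that your normal-form lemma is designed to avoid.
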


\begin{proof}
  We prove the conjunction of the two desired properties by simultaneous set induction on the parameters $\beta$ and $\gamma$. First suppose that
  \[\alpha + \beta \in \alpha + \gamma = \alpha \cup \bigcup_{\delta \in \gamma} \left(\alpha + \delta\right)^+.\]
  By \autoref{lem:ord-plus-not-in-original}, $\neg \alpha + \beta \in \alpha$, thus we must have $\alpha + \beta \in \left(\alpha + \delta\right)^+$ for some $\delta \in \gamma$, i.e.\ $\alpha + \beta \in \alpha + \delta \lor \alpha + \beta = \alpha + \delta$. We thus must have $\beta \in \delta \lor \beta = \delta$ by the inductive hypothesis and thus $\beta \in \gamma$ either way.

  On the other hand, suppose that $\alpha + \beta = \alpha + \gamma$, then
  \[\alpha + \delta \in \left(\alpha + \delta\right)^+ \subseteq \alpha + \beta = \alpha + \gamma\]
  for any $\delta \in \beta$. By the inductive hypothesis, we have $\delta \in \gamma$ for any $\delta \in \beta$, i.e.\ $\beta \subseteq \gamma$. An entirely symmetrical argument shows that $\gamma \subseteq \beta$ as well, hence $\beta = \gamma$ as desired.
\end{proof}

Recall from the introduction that incomparability is defined as
\[\alpha \perp \beta \quad \colonequals \quad \neg \alpha \in \beta^+ \land \neg \beta \in \alpha^+.\]
We check that this is preserved by carefully constructed sums:

\begin{lemma}
  \label{lem:perp-plus-still-perp}
  When $\alpha \perp \beta \in \Ord$, for any $\gamma \in \Ord$, also $\alpha \perp \beta^+ + \gamma$.
\end{lemma}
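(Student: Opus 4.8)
The plan is to unfold the goal $\alpha \perp \beta^+ + \gamma$ into its two conjuncts, $\neg \alpha \in (\beta^+ + \gamma)^+$ and $\neg (\beta^+ + \gamma) \in \alpha^+$, and establish each separately. From the hypothesis $\alpha \perp \beta$ I will only ever use the two facts $\neg \alpha \in \beta^+$ and $\neg \beta \in \alpha$, the latter extracted from $\neg \beta \in \alpha^+$. Throughout I rely on one elementary observation, immediate from the recursive clause $\beta^+ + \delta = \beta^+ \cup \bigcup_{\varepsilon \in \delta}(\beta^+ + \varepsilon)^+$: namely that $\beta \in \beta^+ \subseteq \beta^+ + \delta$ for every $\delta \in \Ord$, so that $\beta$ is an element of every such sum.

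For the first conjunct I would argue by set induction on $\gamma$, which is the one genuinely inductive part of the proof. Assume $\alpha \in (\beta^+ + \gamma)^+$, that is, $\alpha \in \beta^+ + \gamma$ or $\alpha = \beta^+ + \gamma$. If $\alpha = \beta^+ + \gamma$, then $\beta \in \beta^+ + \gamma = \alpha$, contradicting $\neg \beta \in \alpha$. Otherwise $\alpha \in \beta^+ + \gamma = \beta^+ \cup \bigcup_{\delta \in \gamma}(\beta^+ + \delta)^+$, so either $\alpha \in \beta^+$, directly contradicting the relevant half of $\alpha \perp \beta$, or $\alpha \in (\beta^+ + \delta)^+$ for some $\delta \in \gamma$, contradicting the inductive hypothesis at $\delta$. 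Hence $\neg \alpha \in (\beta^+ + \gamma)^+$.

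The second conjunct needs no induction at all. It suffices to rule out $\beta^+ + \gamma \in \alpha$ and $\beta^+ + \gamma = \alpha$. Since $\alpha$ is transitive, the first case gives $\beta^+ + \gamma \subseteq \alpha$, hence $\beta \in \beta^+ + \gamma \subseteq \alpha$, contradicting $\neg \beta \in \alpha$; the second case is the same computation with the final inclusion replaced by an equality. Putting the two conjuncts together yields $\alpha \perp \beta^+ + \gamma$, and since $\beta^+ + \gamma \in \Ord$ by the setup this is a well-formed incomparability statement.

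The one point that needs care is that incomparability is a two-sided condition, so one must keep track of which half of $\alpha \perp \beta$ is doing the work in each case: it turns out that $\neg \beta \in \alpha$ carries essentially all the weight, via the trick that $\beta$ lies inside $\beta^+$ and hence inside every $\beta^+ + \delta$, whereas $\neg \alpha \in \beta^+$ is invoked only once. Beyond this bookkeeping and the single set induction on $\gamma$, nothing of real depth is involved.
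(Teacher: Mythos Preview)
Your proof is correct and follows essentially the same approach as the paper's. The only organizational difference is that the paper first establishes $\neg (\beta^+ + \gamma) \in \alpha^+$ (noting this already yields $\neg \alpha = \beta^+ + \gamma$) and then uses that fact inside a set induction proving $\neg \alpha \in \beta^+ + \gamma$, whereas you fold the equality case directly into a single induction on the stronger statement $\neg \alpha \in (\beta^+ + \gamma)^+$; the underlying ideas and case splits are identical.
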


\begin{proof}
  Firstly, suppose that $\beta^+ + \gamma \in \alpha^+$. Then $\beta \in \beta^+ + \gamma \subseteq \alpha$, contradicting our assumption that $\alpha \perp \beta$. Observe that this already implies $\neg \alpha = \beta^+ + \gamma$ for any $\gamma \in \Ord$.

  It suffices to further show by set induction on $\gamma$ that $\neg \alpha \in \beta^+ + \gamma$. Suppose that $\alpha \in \beta^+ + \gamma$ yet $\neg \alpha \in \beta^+ + \delta$ for every $\delta \in \gamma$. Since also $\neg \alpha = \beta^+ + \delta$, i.e.\ we have $\neg \alpha \in \left(\beta^+ + \delta\right)^+$, thus we must have $\alpha \in \beta^+$. This again contradicts $\alpha \perp \beta$.
\end{proof}

\begin{corollary}
  \label{cor:ord-pair}
  When $\alpha \perp \beta \in \Ord$, for any $\gamma_1, \gamma_2, \delta_1, \delta_2 \in \Ord$, we have
  \[\left(\alpha^+ + \gamma_1\right) \cup \left(\beta^+ + \gamma_2\right) = \left(\alpha^+ + \delta_1\right) \cup \left(\beta^+ + \delta_2\right) \rightarrow \gamma_1 = \delta_1 \land \gamma_2 = \delta_2.\]
\end{corollary}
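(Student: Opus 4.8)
The plan is to derive $\gamma_1 = \delta_1$ and $\gamma_2 = \delta_2$ from the injectivity of ordinal addition, \autoref{lem:ord-plus-inj}, rather than by attempting to recover each summand $\alpha^+ + \gamma_1$, $\beta^+ + \gamma_2$ from the union as a $\Delta_0$-definable subclass. The point to watch is that the two ordinals $\alpha^+ + \gamma_1$ and $\beta^+ + \gamma_2$ overlap substantially --- every sufficiently small ordinal lies in both --- so mere membership in the union does not reveal which summand an element came from. The remedy is to probe the union only with the distinguished ``witness'' elements $\alpha^+ + \eta$ for $\eta \in \gamma_1$ (and likewise $\beta^+ + \eta$ for $\eta \in \gamma_2$), which are large enough that incomparability forbids them from landing in the wrong summand.

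First I would record the auxiliary fact that $\left(\alpha^+ + \eta\right) \perp \left(\beta^+ + \delta\right)$ for all $\eta, \delta \in \Ord$. This follows from two applications of \autoref{lem:perp-plus-still-perp}: since $\perp$ is symmetric, $\alpha \perp \beta$ yields $\beta \perp \alpha^+ + \eta$, hence $\left(\alpha^+ + \eta\right) \perp \beta$, and a second application, now with first argument $\alpha^+ + \eta$, gives $\left(\alpha^+ + \eta\right) \perp \beta^+ + \delta$. In particular $\neg\left(\alpha^+ + \eta\right) \in \beta^+ + \delta$, and symmetrically $\neg\left(\beta^+ + \eta\right) \in \alpha^+ + \delta$, for all $\eta, \delta \in \Ord$.

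Now let $u$ denote the common value of the two unions. For $\eta \in \gamma_1$ we have $\alpha^+ + \eta \in \left(\alpha^+ + \eta\right)^+ \subseteq \alpha^+ + \gamma_1 \subseteq u = \left(\alpha^+ + \delta_1\right) \cup \left(\beta^+ + \delta_2\right)$; since $\alpha^+ + \eta \notin \beta^+ + \delta_2$ by the auxiliary fact, we conclude $\alpha^+ + \eta \in \alpha^+ + \delta_1$, hence $\eta \in \delta_1$ by \autoref{lem:ord-plus-inj}. Thus $\gamma_1 \subseteq \delta_1$, and running the same argument with the two sides of the hypothesised equation interchanged gives $\delta_1 \subseteq \gamma_1$, so $\gamma_1 = \delta_1$ by extensionality. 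Interchanging the roles of $\alpha$ and $\beta$ throughout (and correspondingly swapping $\gamma_1$ with $\gamma_2$ and $\delta_1$ with $\delta_2$) gives $\gamma_2 = \delta_2$ in exactly the same way.

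The only genuinely delicate step is the auxiliary incomparability fact in the second paragraph; everything after it is bookkeeping with \autoref{lem:ord-plus-inj}. No fresh set induction is needed, since \autoref{lem:perp-plus-still-perp} already packages the one induction that the argument relies on.
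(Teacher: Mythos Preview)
Your proof is correct and follows essentially the same route as the paper's: probe the union with elements $\alpha^+ + \eta$, rule out membership in the $\beta^+$-side, and apply \autoref{lem:ord-plus-inj}. The only difference is cosmetic --- the paper rules out $\alpha^+ + \eta \in \beta^+ + \delta_2$ by noting that transitivity would force $\alpha \in \beta^+ + \delta_2$, contradicting a single application of \autoref{lem:perp-plus-still-perp}, whereas you iterate that lemma twice to obtain the stronger $(\alpha^+ + \eta) \perp (\beta^+ + \delta_2)$.
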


\begin{proof}
  Assume $\left(\alpha^+ + \gamma_1\right) \cup \left(\beta^+ + \gamma_2\right) = \left(\alpha^+ + \delta_1\right) \cup \left(\beta^+ + \delta\right)$. For any $\eta \in \gamma_1$, we have $\alpha^+ + \eta \in \alpha^+ + \gamma_1 \subseteq \left(\alpha^+ + \delta_1\right) \cup \left(\beta^+ + \delta_2\right)$. Suppose that $\alpha^+ + \eta \in \beta^+ + \delta_2$, then $\alpha \in \beta^+ + \delta_2$ by transitivity, and this contradicts \autoref{lem:perp-plus-still-perp}. This means we must have $\alpha^+ + \eta \in \alpha^+ + \delta_1$ and thus $\eta \in \delta_1$ by \autoref{lem:ord-plus-inj}, i.e.\ $\gamma_1 \subseteq \delta_1$. Entirely symmetrical arguments show that $\delta_1 \subseteq \gamma_1$, $\gamma_2 \subseteq \delta_2$ and $\delta_2 \subseteq \gamma_2$ as well, hence $\gamma_1 = \delta_1$ and $\gamma_2 = \delta_2$ as desired.
\end{proof}

Observe that \autoref{cor:ord-pair} essentially claims that given two incomparable ordinals, there is a method to represent any pair of two ordinals as a single ordinal uniformly.

Now, before proceeding to treat antichains, we observe that the classical way to define an antichain, i.e.
\[\forall a, b \in \dom\mleft(f\mright) \left(a \neq b \leftrightarrow f\mleft(a\mright) \perp f\mleft(b\mright)\right),\]
is intuitionistically ``weak'' because the both sides of the biconditional consist of negated formulae, so we cannot derive positive statements like $a = b$ when we need them since intuitionistic logic lacks double-negation elimination. Thus, we choose to adopt an alternative definition and say that for any set $x$, a function $f : x \rightarrow \Ord$ on it is \emph{pairwise incomparable} if
\[\forall a, b \in x \left(\neg f\mleft(a\mright) \in f\mleft(b\mright) \land \left(f\mleft(a\mright) = f\mleft(b\mright) \rightarrow a = b\right)\right).\]
This is classical equivalent to the biconditional above, but intuitionistically stronger.

\begin{proposition}
  \label{prop:perp-ineq-to-perp}
  When $\alpha \perp \beta \in \Ord$, for any set $s \subseteq \Ord$ of ordinals, the function $f : s \rightarrow \Ord$ given by
  \[f\mleft(\gamma\mright) = \alpha^+ \cup \left(\beta^+ + \gamma\right)\]
  is pairwise incomparable.
\end{proposition}

\begin{proof}
  We first show that $\neg f\mleft(\gamma\mright) \in f\mleft(\delta\mright)$ for any $\gamma, \delta \in s$. Suppose that $\alpha^+ \cup \left(\beta^+ + \gamma\right) \in \alpha^+ \cup \left(\beta^+ + \delta\right)$. Notice that $\alpha^+ \cup \left(\beta^+ + \gamma\right) \in \alpha^+$ implies $\alpha \in \alpha^+ \cup \left(\beta^+ + \gamma\right) \subseteq \alpha$, contradicting \autoref{lem:no-in-self}, thus we must have $\alpha^+ \cup \left(\beta^+ + \gamma\right) \in \beta^+ + \delta$ instead. However, by \autoref{lem:perp-plus-still-perp}, we have $\neg \alpha \in \beta^+ + \delta$ and hence another contradiction. Thus we must have $\neg \alpha^+ \cup \left(\beta^+ + \gamma\right) \in \alpha^+ \cup \left(\beta^+ + \delta\right)$.

  Now, it remains to show that
  \[\alpha^+ \cup \left(\beta^+ + \gamma\right) = \alpha^+ \cup \left(\beta^+ + \delta\right) \rightarrow \gamma = \delta.\]
  This is simply an instance of \autoref{cor:ord-pair} where $\gamma_1 = \delta_1 = \varnothing$.
\end{proof}

Given fixed $\alpha \perp \beta \in \Ord$ and any function $f : x \rightarrow \Ord$, we shall define $f^\perp : x \rightarrow \Ord$ by
\[f^\perp\mleft(y\mright) = \alpha ^+ \cup \left(\beta^+ + f\mleft(y\mright)\right).\]
Observe that by \autoref{prop:perp-ineq-to-perp}, $f^\perp$ is pairwise incomparable if and only if $f$ is injective, i.e.\ $f$ satisfies $\forall y, z \in x \left(f\mleft(y\mright) = f\mleft(z\mright) \rightarrow y = z\right)$.

\begin{proposition}
  \label{prop:pairwise-perp-powerset-ord}
  Let $x$ be any set and $f : x \rightarrow \Ord$ be pairwise incomparable. Then for any $y \subseteq x$,
  \[\forall z \in x \left(z \in y \leftrightarrow f\mleft(z\mright) \in \bigcup_{t \in y} f\mleft(t\mright)^+\right).\]
\end{proposition}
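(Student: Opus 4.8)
The plan is to prove the biconditional pointwise, handling the two directions separately; note first that the indexed union $\bigcup_{t \in y} f\mleft(t\mright)^+$ is a legitimate set, obtained from $y$ by ($\Delta_0$-)replacement along the $\Sigma$-definable operation $t \mapsto f\mleft(t\mright)^+$ followed by the union axiom. The forward direction will then be essentially immediate: fix $z \in x$ with $z \in y$; we always have $f\mleft(z\mright) \in f\mleft(z\mright)^+$, and since $z$ is itself one of the indices $t \in y$ we have $f\mleft(z\mright)^+ \subseteq \bigcup_{t \in y} f\mleft(t\mright)^+$, so $f\mleft(z\mright) \in \bigcup_{t \in y} f\mleft(t\mright)^+$. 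No incomparability hypothesis is needed here.

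The converse is where the shape of $\Theta$ does the work. Assume $f\mleft(z\mright) \in \bigcup_{t \in y} f\mleft(t\mright)^+$; unwinding the union produces a witness $t \in y$ with $f\mleft(z\mright) \in f\mleft(t\mright)^+ = f\mleft(t\mright) \cup \left\{f\mleft(t\mright)\right\}$, hence the disjunction $f\mleft(z\mright) \in f\mleft(t\mright) \lor f\mleft(z\mright) = f\mleft(t\mright)$. Since $t \in y \subseteq x$ and $z \in x$, pairwise incomparability supplies $\Theta\mleft(z, f\mleft(z\mright), t, f\mleft(t\mright)\mright)$. Its first conjunct, $\neg f\mleft(z\mright) \in f\mleft(t\mright)$, lets me discharge the left disjunct by \emph{ex falso}, leaving $f\mleft(z\mright) = f\mleft(t\mright)$; the biconditional clause $f\mleft(z\mright) = f\mleft(t\mright) \leftrightarrow z = t$ of $\Theta$ then forces $z = t$, whence $z \in y$ because $t \in y$. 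The existential elimination that extracts $t$ and the case split on the disjunction are both unproblematic intuitionistically.

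I do not anticipate a real obstacle; the one point deserving care is that the argument genuinely uses the \emph{positive} content of $\Theta$ and not the classical negated trichotomy $z \neq t \leftrightarrow f\mleft(z\mright) \perp f\mleft(t\mright)$. We must pass from $f\mleft(z\mright) = f\mleft(t\mright)$ to the equality $z = t$, which is exactly the forward half of the biconditional inside $\Theta$, and we must use $\neg f\mleft(z\mright) \in f\mleft(t\mright)$ as an actual fact to eliminate one disjunct of a decidable-looking case split that is not in fact decidable — neither step would be available from the doubly negated formulation. This is precisely why the preceding development was set up around $\Theta$, and the present proposition is where that choice pays off.
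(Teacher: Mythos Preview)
Your proof is correct and follows exactly the same route as the paper's: the forward direction is immediate, and for the converse you extract a witness $t \in y$ with $f(z) \in f(t)^+$, use the conjunct $\neg f(z) \in f(t)$ of $\Theta$ to force $f(z) = f(t)$, and then the biconditional clause to conclude $z = t \in y$. The additional commentary on set existence and on why the positive form of $\Theta$ is essential is accurate and simply spells out what the paper leaves implicit.
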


\begin{proof}
  The forward direction is trivial. For the backward direction, suppose that $f\mleft(z\mright) \in \bigcup_{t \in y} f\mleft(t\mright)^+$, i.e.\ $f\mleft(z\mright) \in f\mleft(t\mright)^+$ for some $t \in y$. From the pairwise incomparability condition, we know that $\neg f\mleft(z\mright) \in f\mleft(t\mright)$, so we must have $f\mleft(z\mright) = f\mleft(t\mright)$, which immediately implies $z = t \in y$.
\end{proof}

\begin{corollary}
  \label{cor:perp-to-pow}
  Fix $\alpha \perp \beta \in \Ord$. Let $x$ be any set and $f : x \rightarrow \Ord$ be pairwise incomparable. Then for any set\footnote{We work in $\mathrm{IKP}$ which does not assert the axiom of powerset, thus the notation $\mathcal{P}\mleft(x\mright)$ will be treated as a $\Delta_0$-class (which may not be a set) in our context.} $y \subseteq \mathcal{P}\mleft(x\mright)$, there exists $g : y \rightarrow \Ord$ that is also pairwise incomparable.
\end{corollary}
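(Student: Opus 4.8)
The plan is to collapse the powerset layer down to plain ordinals first, and then re-apply the incomparability trick one more time. Given the pairwise incomparable $f : x \to \Ord$, I would define a map $h : y \to \Ord$ by sending each $z \in y$ --- which is a subset of $x$, since $y \subseteq \mathcal{P}\mleft(x\mright)$ --- to
\[h\mleft(z\mright) \colonequals \bigcup_{t \in z} f\mleft(t\mright)^+.\]
Here $h\mleft(z\mright)$ is genuinely an ordinal: it is a union of the ordinals $f\mleft(t\mright)^+$, and a union of a set of ordinals is transitive and has only transitive members, hence lies in $\Ord$. Moreover $h$ is $\Sigma$-definable from the parameters $f$ and $y$, so by $\Sigma$-replacement in $\mathrm{IKP}$ it is a set, i.e.\ an honest function with domain $y$.

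Next I would check that $h$ is injective. Suppose $h\mleft(z\mright) = h\mleft(z'\mright)$ for $z, z' \in y$. Applying \autoref{prop:pairwise-perp-powerset-ord} to the subsets $z, z' \subseteq x$, for every $w \in x$ we have
\[w \in z \;\leftrightarrow\; f\mleft(w\mright) \in \bigcup_{t \in z} f\mleft(t\mright)^+ = \bigcup_{t \in z'} f\mleft(t\mright)^+ \;\leftrightarrow\; w \in z'.\]
Since $z, z' \subseteq x$, extensionality then gives $z = z'$, so $h$ is an injection $y \to \Ord$.

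Finally, I would invoke the construction of $f^\perp$ described just after \autoref{prop:perp-ineq-to-perp}: with $\alpha \perp \beta$ fixed and $h$ injective, the map
\[g \colonequals h^\perp : y \to \Ord, \qquad z \mapsto \alpha^+ \cup \left(\beta^+ + h\mleft(z\mright)\right)\]
is pairwise incomparable by \autoref{prop:perp-ineq-to-perp}, and this $g$ witnesses the claim. I do not expect a serious obstacle: the only points requiring care are the bookkeeping that $h\mleft(z\mright) \in \Ord$ and that forming $h$ and $g$ remains within the $\Sigma$-replacement and $\Sigma$-recursion (for ordinal addition) that $\mathrm{IKP}$ provides, both of which are routine given the earlier lemmas.
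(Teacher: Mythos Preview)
Your proposal is correct and follows essentially the same approach as the paper: define $h(z) = \bigcup_{t \in z} f(t)^+$, use \autoref{prop:pairwise-perp-powerset-ord} to verify injectivity, and then set $g = h^\perp$. The additional remarks you make about $h(z) \in \Ord$ and the availability of $\Sigma$-replacement are accurate bookkeeping that the paper leaves implicit.
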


\begin{proof}
  We define $h : y \rightarrow \Ord$ as $z \mapsto \bigcup_{t \in z} f\mleft(t\mright)^+$. Then by \autoref{prop:pairwise-perp-powerset-ord},
  \[\forall r, s \in y \left(h\mleft(r\mright) = h\mleft(s\mright) \rightarrow \forall t \left(t \in r \leftrightarrow t \in s\right)\right),\]
  i.e.\ $h$ is injective. Therefore, $g = h^\perp$ is pairwise incomparable.
\end{proof}

Additionally, using \autoref{cor:ord-pair} we can merge ordinal-indexed families of pairwise incomparable functions:

\begin{proposition}
  \label{prop:perp-to-ord-seq-union}
  Fix $\alpha \perp \beta \in \Ord$. Consider some set $s \subseteq \Ord$ and a function $a$ with domain $s$. For any function $f$ with domain $s$ such that $f\mleft(\gamma\mright) : a\mleft(\gamma\mright) \rightarrow \Ord$ is a pairwise incomparable function for each $\gamma \in s$, then there exists $g : \bigcup_{\gamma \in s} a\mleft(\gamma\mright) \rightarrow \Ord$ that is also pairwise incomparable.
\end{proposition}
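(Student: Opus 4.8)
The plan is to reduce to the injective case treated just after \autoref{prop:perp-ineq-to-perp}: once we produce an injective $g' : \bigcup_{\gamma \in s} a(\gamma) \to \Ord$, the function $g \colonequals (g')^\perp$ will automatically be pairwise incomparable and have the required domain. The idea for $g'$ is that each $y$ in the union carries, for every $\gamma \in s$ with $y \in a(\gamma)$, a piece of local data, namely the pair $\langle \gamma, f(\gamma)(y)\rangle$, and we want to pack all of these pieces into a single ordinal using the ordinal-pairing of \autoref{cor:ord-pair}.

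Concretely, write $p(\eta, \xi) \colonequals (\alpha^+ + \eta) \cup (\beta^+ + \xi)$ for the pairing ordinal of \autoref{cor:ord-pair}, and for $y \in \bigcup_{\gamma \in s} a(\gamma)$ put
\[S_y \colonequals \left\{\gamma \in s : y \in a(\gamma)\right\},\]
which is a set by $\Delta_0$-separation and is inhabited by the choice of $y$. I would then define
\[g'(y) \colonequals \bigcup_{\gamma \in S_y} p\bigl(\gamma, f(\gamma)(y)\bigr)^+,\]
which is an ordinal, being a union of ordinals, and which is $\Sigma$-definable (the image of $S_y$ under $\gamma \mapsto p(\gamma, f(\gamma)(y))^+$ being a set by collection in $\mathrm{IKP}$). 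Forming the successors before taking the union is essential: it guarantees $p(\gamma, f(\gamma)(y)) \in g'(y)$ for every $\gamma \in S_y$, which would otherwise fail --- for instance when $S_y$ is a singleton --- by \autoref{lem:no-in-self}.

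The heart of the argument is to verify that $g'$ is injective. Suppose $g'(y) = g'(z)$; to conclude $y = z$, fix any $\gamma \in S_y$. Since $p(\gamma, f(\gamma)(y)) \in g'(y) = g'(z)$, there is $\delta \in S_z$ with $p(\gamma, f(\gamma)(y)) \in p(\delta, f(\delta)(z))^+$, i.e.\ either membership or equality holds between these two ordinals. The left ordinal contains $\alpha$ (via its summand $\alpha^+ + \gamma$) and $\beta$ (via $\beta^+ + f(\gamma)(y)$), so in the membership case transitivity of $p(\delta, f(\delta)(z))$ forces $\alpha \in \beta^+ + f(\delta)(z)$ or $\beta \in \alpha^+ + \delta$; the former contradicts \autoref{lem:perp-plus-still-perp} applied to $\alpha \perp \beta$, and the latter contradicts the same lemma applied to $\beta \perp \alpha$, using that incomparability is symmetric. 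Hence the equality case holds, and \autoref{cor:ord-pair} yields $\gamma = \delta$ together with $f(\gamma)(y) = f(\delta)(z) = f(\gamma)(z)$; since $y \in a(\gamma)$ and $z \in a(\delta) = a(\gamma)$, the pairwise incomparability --- in particular injectivity --- of $f(\gamma)$ gives $y = z$. Finally set $g \colonequals (g')^\perp$ and invoke \autoref{prop:perp-ineq-to-perp}.

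The step I expect to be delicate is ruling out the membership case above: one has to remember to apply \autoref{lem:perp-plus-still-perp} in both orientations of the incomparable pair and to argue entirely through transitivity of ordinals, since intuitionistically there is no trichotomy on which to case-split.
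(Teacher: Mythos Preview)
Your proof is correct and rests on the same core idea as the paper's: use the ordinal pairing $p(\gamma,\xi) = (\alpha^+ + \gamma) \cup (\beta^+ + \xi)$ to tag each local value $f(\gamma)(y)$ with its index $\gamma$, and then aggregate over all indices at which $y$ appears. The paper organises this more modularly: it forms the disjoint-union set $b = \{\langle\gamma,x\rangle : \gamma \in s,\ x \in a(\gamma)\}$, defines the injective $\tilde f(\langle\gamma,x\rangle) = p(\gamma, f(\gamma)(x))$, upgrades it to the pairwise incomparable $\tilde f^\perp$, and then invokes \autoref{cor:perp-to-pow} on the fibres $b_x \subseteq b$ to produce $g$. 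Your route inlines this last stage: you take $g'(y) = \bigcup_{\gamma \in S_y} p(\gamma, f(\gamma)(y))^+$ directly and verify injectivity by hand, ruling out the membership case via the same $\alpha\perp\beta$ argument that underlies \autoref{prop:perp-ineq-to-perp}. This saves one application of $(\cdot)^\perp$ and avoids the detour through \autoref{cor:perp-to-pow}, at the price of re-deriving the non-membership step; the paper's version trades that economy for modularity, reusing \autoref{prop:pairwise-perp-powerset-ord} and \autoref{cor:perp-to-pow} as black boxes.
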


\begin{proof}
  Consider the set of pairs
  \[b = \left\{\tuple{\gamma, x} : \gamma \in s \land x \in a\mleft(\gamma\mright)\right\}.\]
  We define $\tilde{f} : b \rightarrow \Ord$ as $\tuple{\gamma, x} \mapsto \left(\alpha^+ + \gamma\right) \cup \left(\beta^+ + f\mleft(\gamma\mright)\mleft(x\mright)\right)$. By \autoref{cor:ord-pair} and the fact that each $f\mleft(\gamma\mright)$ is pairwise incomparable,
  \[\tilde{f}\mleft(\tuple{\gamma, x}\mright) = \tilde{f}\mleft(\tuple{\delta, y}\mright) \rightarrow \gamma = \delta \land x = y\]
  for any $\tuple{\gamma, x}, \tuple{\delta, y} \in b$, so $\tilde{f}^\perp$ is pairwise incomparable. Now, the set
  \[c = \left\{b_x : x \in \bigcup_{\gamma \in s} a\mleft(\gamma\mright)\right\},\]
  where $b_x = \left\{t \in b : \exists \gamma \in s \ t = \tuple{\gamma, x}\right\}$, exists by $\Delta_0$-replacement. Also $c \subseteq \mathcal{P}\mleft(b\mright)$, so by \autoref{cor:perp-to-pow}, there is $h : c \rightarrow \Ord$ that is pairwise incomparable.

  We now construct $g : \bigcup_{\gamma \in s} a\mleft(\gamma\mright) \rightarrow \Ord$ as $x \mapsto h\mleft(b_x\mright)$. It suffices to verify $\forall x, y \in \bigcup_{\gamma \in s} a\mleft(\gamma\mright) \left(g\mleft(x\mright) = g\mleft(y\mright) \rightarrow x = y\right)$. We know that $g\mleft(x\mright) = g\mleft(y\mright)$ implies $b_x = b_y$, but we also know that there exists $\gamma \in s$ such that $x \in a\mleft(\gamma\mright)$. This means $\tuple{\gamma, x} \in b_x = b_y$, i.e.\ there exists $\delta \in s$ such that $\tuple{\gamma, x} = \tuple{\delta, y}$. It follows immediately that $x = y$.
\end{proof}

Finally, fix any set $x$ and its transitive closure is given as
\[\tc{x} = \left\{x\right\} \cup x \cup \bigcup x \cup \bigcup \bigcup x \cup \cdots\]
by recursion on $\omega$. Assuming that there exists incomparable ordinals, we will construct a pairwise incomparable function with domain $\tc{x}$ canonically.

To this end, we shall divide the transitive closure into a hierarchy
\[\tc{x}_\alpha = \left\{y \in \tc{x} : \exists \beta \in \alpha \ y \subseteq \tc{x}_\beta\right\}.\]
The following lemma immediately shows that $\tc{x} = \bigcup_{\alpha \in \Ord} \tc{x}_\alpha$:

\begin{lemma}
  \label{lem:tc-hier-contains-all}
  For any set $x$ and any $y \in \tc{x}$, there exists $\alpha \in \Ord$ such that $y \subseteq \tc{x}_\alpha$, i.e.\ $y \in \tc{x}_{\alpha^+}$.
\end{lemma}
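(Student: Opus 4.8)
The plan is to equip $\tc{x}$ with its usual rank function $\rho$, defined by $\Sigma$-recursion,
\[\rho(y) = \bigcup_{z \in y} \rho(z)^+,\]
and to prove by set induction on $y$ that every $y \in \tc{x}$ satisfies $y \subseteq \tc{x}_{\rho(y)}$. Since $\tc{x}$ is transitive and $\rho(y) \in \rho(y)^+$, this last statement is precisely $y \in \tc{x}_{\rho(y)^+}$, so $\alpha \colonequals \rho(y)$ will witness the lemma.

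Before the induction I would record two routine facts. First, $\tc{x} = \TC\mleft(\left\{x\right\}\mright)$ is a transitive set, so $z \in y \in \tc{x}$ implies $z \in \tc{x}$; this is what lets the inductive hypothesis be applied to the elements of $y$. Second, the hierarchy is monotone: if $\alpha \subseteq \beta$ then $\tc{x}_\alpha \subseteq \tc{x}_\beta$, which is immediate from the defining clause $\tc{x}_\alpha = \left\{w \in \tc{x} : \exists \gamma \in \alpha \ w \subseteq \tc{x}_\gamma\right\}$ since any witness $\gamma \in \alpha$ is also a witness $\gamma \in \beta$; in particular $\gamma \in \alpha$ already gives $\tc{x}_\gamma \subseteq \tc{x}_\alpha$.

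For the inductive step, fix $y \in \tc{x}$ and assume that $z \subseteq \tc{x}_{\rho(z)}$ for every $z \in y$. Given such a $z$: by transitivity $z \in \tc{x}$, so from $z \subseteq \tc{x}_{\rho(z)}$ and $\rho(z) \in \rho(z)^+$ we obtain $z \in \tc{x}_{\rho(z)^+}$. Moreover $z \in y$ gives $\rho(z)^+ \subseteq \bigcup_{w \in y} \rho(w)^+ = \rho(y)$, so monotonicity yields $z \in \tc{x}_{\rho(z)^+} \subseteq \tc{x}_{\rho(y)}$. Since $z \in y$ was arbitrary, $y \subseteq \tc{x}_{\rho(y)}$, which closes the induction.

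I do not expect a real obstacle: the only ingredient beyond elementary manipulation is $\Sigma$-recursion for $\rho$, which is the same tool already used to build the hierarchy. The point to stay alert to is that the whole argument must avoid splitting ordinals into zero/successor/limit cases; but the uniform clause defining $\tc{x}_\alpha$, together with the single uniform clause defining $\rho$, sidesteps this. An alternative route that avoids naming $\rho$ is to run set induction directly on $y$, extract from the inductive hypothesis some $\alpha_z \in \Ord$ with $z \subseteq \tc{x}_{\alpha_z}$ for each $z \in y$, use $\Sigma$-collection to bound these ordinals in a set $b$, and take $\alpha \colonequals \mleft(\bigcup\mleft(b \cap \Ord\mright)\mright)^+$; the same monotonicity computation then gives $y \subseteq \tc{x}_\alpha$.
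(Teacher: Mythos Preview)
Your proof is correct. The paper takes precisely the route you sketch as an alternative at the end: set induction on $y$, then strong $\Sigma$-collection to gather witnessing ordinals into a set $s$, and finally $\beta = \bigcup_{\alpha \in s} \alpha^+$ gives $y \subseteq \tc{x}_\beta$. Your main argument via the rank function $\rho$ is a mild but pleasant repackaging: instead of invoking collection at each step of the induction, you absorb that work once into the $\Sigma$-recursion defining $\rho$, after which the inductive step is a pure monotonicity computation. This buys you a canonical, explicit witness $\alpha = \rho(y)$ rather than a merely existential one, which is in the spirit of the paper's later aims; the paper's version is marginally more direct in that it does not need to introduce and verify an auxiliary class function.
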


\begin{proof}
  We prove this by set induction on $y$. Assume $y \in \tc{x}$ and suppose that
  \[\forall z \in y \ \left(z \in \tc{x} \rightarrow \exists \alpha \in \Ord \ z \subseteq \tc{x}_\alpha\right).\]
  By transitivity $y \subseteq \tc{x}$, thus by strong $\Sigma$-collection we have a set $s \subseteq \Ord$ such that $\forall z \in y \ \exists \alpha \in s \ z \subseteq \tc{x}_\alpha$. Take $\beta = \bigcup_{\alpha \in s} \alpha^+$, then $s \subseteq \beta$ and hence $y \subseteq \tc{x}_\beta$.
\end{proof}

It is easy to verify that $\forall \alpha, \beta \in \Ord \left(\alpha \subseteq \beta \rightarrow \tc{x}_\alpha \subseteq \tc{x}_\beta\right)$, thus
\[\tc{x}_{\alpha^+} = \tc{x} \cap \mathcal{P}\mleft(\tc{x}_\alpha\mright),\]
and also $\forall \alpha \in \Ord \ \tc{x}_\alpha = \bigcup_{\beta \in \alpha} \tc{x}_{\beta^+}$.

Observe that in the proofs of both \autoref{cor:perp-to-pow} and \autoref{prop:perp-to-ord-seq-union}, we gave explicit constructions of the desired $g$ from $f$, and it is not hard to verify that the described functions can be uniquely defined from $f$ using a $\Sigma$-formula. Fix $\alpha \perp \beta \in \Ord$ and we can iterate the following process: given a family $\left(f_\delta\right)_{\delta \in \gamma}$ where each $f_\delta : \tc{x}_\delta \rightarrow \Ord$ is a pairwise incomparable function, then we collect the family $\left(g_\delta\right)_{\delta \in \gamma}$ of functions $g_\delta : \tc{x}_{\delta^+} \rightarrow \Ord$ built in \autoref{cor:perp-to-pow} through $\Sigma$-replacement and thus get a pairwise incomparable function $f_\gamma : \tc{x}_\gamma \rightarrow \Ord$ by \autoref{prop:perp-to-ord-seq-union}. By $\Sigma$-recursion, we then have a $\Sigma$-definable class-family $\left(f_{x, \gamma}\right)_{\gamma \in \Ord}$ where each $f_{x, \gamma} : \tc{x}_\gamma \rightarrow \Ord$ is pairwise incomparable.

Additionally, by \autoref{lem:tc-hier-contains-all}, there must exist some $\gamma \in \Ord$ such that $x \in \tc{x}_\gamma$. Since it is easy to verify that each $\tc{x}_\alpha$ is transitive, we must have
\[x \subseteq \tc{x} = \tc{x}_\gamma.\]
Consequently, $f_{x, \gamma}$ is a pairwise incomparable function with domain $\tc{x}$, and its restriction $\left.f_{x, \gamma}\right|_x$ will immediately provide a bijection between $x$ and an antichain of ordinals, as claimed in the introduction. This function will be $\Sigma$-definable using the incomparable ordinals $\alpha \perp \beta \in \Ord$, the domain $x$ and an arbitrary (large enough) ordinal $\gamma$ as parameters.

\section{The case inside \texorpdfstring{$L$}{L}}

G\"odel's constructible universe $L$ can be defined\footnote{We always assume that $\mathrm{IKP}$ contains the axiom of infinity, so we do not need to worry about working with countable sets of formulae or recursion on $\omega$. For a more intricate treatment of $L$ that is still viable without the axiom of infinity, refer to section 4--5 of \cite{matthews-rathjen24-constructible-universe}.} in $\mathrm{IKP}$ in the usual way, by formalising the construction of $\dd\mleft(A\mright)$, the collection of first-order definable subsets (with parameters) of $\tuple{A; \in}$. We can then let
\[L_\alpha = \bigcup_{\beta \in \alpha} \dd\mleft(L_\beta\mright)\]
for all $\alpha \in \Ord$ recursively and finally $L = \bigcup_{\alpha \in \Ord} L_\alpha$.

As mentioned, basic properties of $L$ in an intuitionistic set theory were treated by Lubarsky \cite{lubarsky93-intuitionistic-l} and more recently by Matthews and Rathjen \cite{matthews-rathjen24-constructible-universe}. Notably, in order to prove the main result of this paper, we will use Theorem 5.7 in \cite{matthews-rathjen24-constructible-universe} that

\begin{fact}
  For every axiom $\varphi$ of $\mathrm{IKP}$, $\mathrm{IKP} \vdash L \vDash \varphi$.
\end{fact}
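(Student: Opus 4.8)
The plan is to run the standard relativisation argument, checking each axiom of $\mathrm{IKP}$ separately, with the whole proof resting on two structural facts about $L$ that $\mathrm{IKP}$ establishes alongside the construction. First, $L$ is a transitive class: each $L_\alpha$ is transitive and $\alpha \subseteq \beta$ implies $L_\alpha \subseteq L_\beta$, so $L = \bigcup_{\alpha \in \Ord} L_\alpha$ is transitive and contains every ordinal, whence $\Ord^L = \Ord$. Transitivity immediately yields $\Delta_0$-absoluteness: for any $\Delta_0$ formula $\phi$ with parameters from $L$ we have $\phi^L \leftrightarrow \phi$, and likewise $\phi^{L_\alpha} \leftrightarrow \phi$ whenever the parameters lie in the transitive set $L_\alpha$. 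Second, $L$ is closed under the definability operator: if $a \in L_\alpha$ then every subset of $L_\alpha$ that is $\Delta_0$-definable over $\tuple{L_\alpha; \in}$ with parameters from $L_\alpha$ lies in $\dd\mleft(L_\alpha\mright) \subseteq L_{\alpha^+}$. These two facts are invoked repeatedly.

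With absoluteness in hand, the generating axioms are routine. Extensionality holds in any transitive class. For pairing and union, given $a, b \in L_\alpha$ the sets $\left\{a, b\right\}$ and $\bigcup a$ are $\Delta_0$-definable subsets of $L_\alpha$ with $a, b$ as parameters, hence lie in $\dd\mleft(L_\alpha\mright) \subseteq L$, and absoluteness confirms $L$ believes them to be the pair and the union. Infinity follows because $\omega \in L$ and the defining $\Delta_0$ property of $\omega$ is absolute. For the set-induction scheme one relativises $\in$-induction of $V$: given the hypothesis $\forall x \in L\,(\forall y \in x\, \phi^L(y) \to \phi^L(x))$, transitivity gives $x \subseteq L$ for $x \in L$, so applying set induction in $V$ to the formula $x \in L \to \phi^L(x)$ yields $\forall x \in L\, \phi^L(x)$; no case split on $x \in L$ is needed since one simply assumes the antecedent of the implication.

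For $\Delta_0$-separation, fix $a \in L$ and a $\Delta_0$ formula $\phi$ with parameters from $L$, and choose $\alpha$ with $a$ and all parameters in $L_\alpha$. Since $a \subseteq L$, absoluteness gives that $\left\{x \in a : \phi^L(x)\right\}$ equals $\left\{x \in a : \phi(x)\right\}$, which is a set $b \subseteq a$ by separation in $V$; and as $\phi^{L_\alpha} \leftrightarrow \phi$ on $L_\alpha$, this $b$ is exactly the subset of $L_\alpha$ defined over $\tuple{L_\alpha; \in}$ by $x \in a \land \phi(x)$. Hence $b \in \dd\mleft(L_\alpha\mright) \subseteq L$, and absoluteness shows $b$ is the separated set as computed inside $L$.

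The main obstacle is collection, where the $\Sigma$-definable cumulative hierarchy must do real work and intuitionistic care is required. Suppose $a \in L$ and $\forall x \in a\, \exists y\, \phi^L(x, y)$ with $\phi$ a $\Delta_0$ matrix; since the quantifier $\exists y$ in $L$ ranges over $\bigcup_\eta L_\eta$, absoluteness rewrites this as $\forall x \in a\, \exists \eta \in \Ord\, \exists y \in L_\eta\, \phi(x, y)$. The inner statement $\exists y \in L_\eta\, \phi(x, y)$ is $\Sigma$ because $\eta \mapsto L_\eta$ is $\Sigma$-definable and $\mathrm{IKP}$ proves each $L_\eta$ is a set, so strong $\Sigma$-collection in $V$ supplies a set $S \subseteq \Ord$ with $\forall x \in a\, \exists \eta \in S\, \exists y \in L_\eta\, \phi(x, y)$. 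Crucially I take no case split on ordinals: I set $\lambda = \bigcup_{\eta \in S} \eta^+$, an ordinal with $S \subseteq \lambda$, and monotonicity of the hierarchy gives $\forall x \in a\, \exists y \in L_\lambda\, \phi(x, y)$. As $\lambda \in \Ord = \Ord^L$ we have $L_\lambda \in L$, witnessing ordinary collection in $L$; for the strong form one trims to $\left\{y \in L_\lambda : \exists x \in a\, \phi(x, y)\right\}$, which is legitimate since the bounded $\exists x \in a\, \phi$ remains $\Delta_0$ and so falls under the already-verified $\Delta_0$-separation (a $\Sigma$ matrix is first reduced to this case by the usual manoeuvre). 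The delicate points throughout are that every bound is produced as a supremum rather than by trichotomy, and that the passage from ``$\exists y \in L$'' to ``$\exists \eta\, \exists y \in L_\eta$'' relies on the $\Sigma$-definability of the hierarchy together with $\Delta_0$-absoluteness.
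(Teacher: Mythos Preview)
The paper does not give its own proof of this statement: it is labelled a \emph{Fact} and attributed to Theorem~5.7 of Matthews--Rathjen. So there is no in-paper argument to compare against; your sketch is supplying what the paper deliberately defers to the literature.

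That said, your sketch contains one genuine error that is worth flagging precisely because it is the crux of the whole paper. You assert that $L$ ``contains every ordinal, whence $\Ord^L = \Ord$'', and you later invoke this in the collection step (``As $\lambda \in \Ord = \Ord^L$ we have $L_\lambda \in L$''). But whether $\mathrm{IKP}$, or indeed any intuitionistic set theory, proves $\Ord \subseteq L$ is exactly the open problem the paper is built around; \autoref{thm:main-equiv} shows that in the presence of incomparable ordinals $\Ord \subseteq L$ is already equivalent to $V = L$. You cannot assume it as a structural fact about $L$.

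Fortunately the claim is not actually needed for the step where you use it. For any ordinal $\lambda$ whatsoever, $L_\lambda$ is the subset of $L_\lambda$ defined by the formula $x = x$, hence $L_\lambda \in \dd\mleft(L_\lambda\mright) \subseteq L_{\lambda^+} \subseteq L$, with no appeal to $\lambda \in L$. With this correction your collection argument goes through: after collecting $S \subseteq \Ord$ and forming $\lambda = \bigcup_{\eta \in S} \eta^+$, the set $L_\lambda \in L$ serves as the witness, and trimming via the already-verified $\Delta_0$-separation in $L$ yields the strong form. The remaining axiom verifications are in line with the standard intuitionistic treatment; just drop the unjustified $\Ord^L = \Ord$ from the opening paragraph.
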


This means, starting with two incomparable ordinals $\alpha \perp \beta$ that are already in $L$, the construction of the functions $f_{x, \gamma}$ above can be replicated in $L$. Since the functions are defined by a $\Sigma$-formula, which is upwards absolute, we immediately know that
\[\forall x \in L \ \forall \gamma \in L \cap \Ord \ f_{x, \gamma}^L = f_{x, \gamma}.\]

We need to verify an additional lemma, that each specific value $f_{x, \gamma}\mleft(y\mright)$ of the function does not actually depend on $x$:

\begin{lemma}
  \label{lem:tc-hier-and-f-agree}
  Fix $\alpha \perp \beta \in \Ord$. For any set $x$, any $y \in \tc{x}$ and any $\gamma \in \Ord$, $y \in \tc{x}_\gamma$ if and only if $y \in \tc{y}_\gamma$, and we also have $f_{x, \gamma}\mleft(y\mright) = f_{y, \gamma}\mleft(y\mright)$ when this holds.
\end{lemma}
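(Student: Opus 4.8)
The plan is to prove the two assertions simultaneously by set induction on $y$, carrying the ambient set $x$ and the ordinal $\gamma$ as universally quantified variables inside the induction hypothesis. Precisely, let $\Phi(y)$ be the statement: for every $x$ with $y \in \tc{x}$ and every $\gamma \in \Ord$, one has $y \in \tc{x}_\gamma \leftrightarrow y \in \tc{y}_\gamma$, and moreover $f_{x, \gamma}(y) = f_{y, \gamma}(y)$ whenever $y \in \tc{x}_\gamma$. The reason for carrying $x$ as a bound variable is that $y \in \tc{x}$ forces $y \subseteq \tc{x}$, so each $t \in y$ lies at once in $\tc{x}$, in $\tc{y}$ and in $\tc{t}$; thus in the inductive step $\Phi(t)$ can be invoked with any of the three ambient sets $x$, $y$, $t$, and this is what makes the argument close. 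So fix $y$, assume $\Phi(t)$ for all $t \in y$, and fix $x$ with $y \in \tc{x}$.

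First I would dispose of the transitive-closure equivalence, for all ordinals at once. Since $y \in \tc{x}$, unwinding the definition of the hierarchy gives $y \in \tc{x}_\gamma \leftrightarrow \exists \delta \in \gamma\, \forall t \in y\, (t \in \tc{x}_\delta)$, and symmetrically $y \in \tc{y}_\gamma \leftrightarrow \exists \delta \in \gamma\, \forall t \in y\, (t \in \tc{y}_\delta)$. For each $t \in y$ and each ordinal $\delta$, applying the induction hypothesis $\Phi(t)$ once with ambient set $x$ and once with ambient set $y$ yields $t \in \tc{x}_\delta \leftrightarrow t \in \tc{t}_\delta \leftrightarrow t \in \tc{y}_\delta$. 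Substituting into the two displayed equivalences gives $y \in \tc{x}_\gamma \leftrightarrow y \in \tc{y}_\gamma$; since $\gamma$ was arbitrary this also yields $\{\delta : y \in \tc{x}_{\delta^+}\} = \{\delta : y \in \tc{y}_{\delta^+}\}$, which is needed below.

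Now suppose $y \in \tc{x}_\gamma$, hence also $y \in \tc{y}_\gamma$, and put $D = \{\delta \in \gamma : y \in \tc{x}_{\delta^+}\}$; by the previous paragraph $D = \{\delta \in \gamma : y \in \tc{y}_{\delta^+}\}$. Tracing the construction of $f_{x, \gamma}$ back through \autoref{cor:perp-to-pow} and \autoref{prop:perp-to-ord-seq-union}, one finds that $f_{x, \gamma}(y)$ is a fixed term in $\alpha$, $\beta$, the set $D$, and the ordinals $g_{x, \delta}(y)$ for $\delta \in D$, where for $\delta \in D$ (so $y \subseteq \tc{x}_\delta$) one has $g_{x, \delta}(y) = \alpha^+ \cup \bigl(\beta^+ + \bigcup_{t \in y} f_{x, \delta}(t)^+\bigr)$; the same term with $x$ replaced by $y$ computes $f_{y, \gamma}(y)$. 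So it is enough to check $f_{x, \delta}(t) = f_{y, \delta}(t)$ for every $\delta \in D$ and $t \in y$. For such $\delta$ and $t$ we have $t \in y \subseteq \tc{x}_\delta$ and $t \in y \subseteq \tc{y}_\delta$, so $\Phi(t)$ applied twice more gives $f_{x, \delta}(t) = f_{t, \delta}(t) = f_{y, \delta}(t)$, completing the inductive step and hence the lemma.

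The one place needing care --- and the step I expect to be the main obstacle --- is the ``fixed term'' claim: one has to unwind the two auxiliary constructions to be sure that $\tuple{D, (g_{x, \delta}(y))_{\delta \in D}}$ really is all the data about $x$ that enters $f_{x, \gamma}(y)$, i.e.\ that neither construction covertly consults $x$ itself or other points of $\tc{x}_\gamma$. Concretely, in the instance of \autoref{prop:perp-to-ord-seq-union} that builds $f_{x, \gamma}$, the value at $y$ is $h(b_y)$ with $b_y = \{\tuple{\delta, y} : \delta \in D\}$ and $h$ supplied by \autoref{cor:perp-to-pow} applied to $\tilde{f}^\perp$, while $\tilde{f}(\tuple{\delta, y}) = (\alpha^+ + \delta) \cup (\beta^+ + g_{x, \delta}(y))$; so $h(b_y)$ does depend on $x$ only through $D$ and the ordinals $g_{x, \delta}(y)$, as claimed. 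Everything else is routine: set induction is available in $\mathrm{IKP}$, and $\tc{x}_\delta$, $f_{x, \delta}$, $g_{x, \delta}$ are $\Sigma$-definable from the parameters displayed.
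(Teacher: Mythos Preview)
Your proof is correct and follows essentially the same approach as the paper: set induction on $y$, reducing both claims to the corresponding statements for $t \in y$ at smaller ordinals, and appealing to the explicit form of the constructions in \autoref{cor:perp-to-pow} and \autoref{prop:perp-to-ord-seq-union} to see that $f_{x,\gamma}(y)$ depends on $x$ only through data already controlled by the induction hypothesis. The only differences are organisational: the paper separates out the easy inclusion $\tc{y}_\gamma \subseteq \tc{x}_\gamma$ via an induction on $\gamma$ and treats the two claims by two set inductions, whereas you fold everything into a single induction with $x$ and $\gamma$ universally quantified; you also spell out the ``fixed term'' verification that the paper leaves as ``easy to check''.
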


\begin{proof}
  Observe that $\tc{y}_\gamma \subseteq \tc{x}_\gamma$ follows easily from $\tc{y} \subseteq \tc{x}$ by induction on $\gamma$. For the other direction, we prove by set induction on $y$. Assume $y \in \tc{x}_\gamma$, then there exists $\delta \in \gamma$ such that $y \subseteq \tc{x}_\delta$, but then the inductive hypothesis implies that
  \[y \subseteq \bigcup_{z \in y} \tc{z}_\delta \subseteq \tc{y}_\delta\]
  as well.

  Additionally, it is easy to check with the explicit constructions in \autoref{cor:perp-to-pow} and \autoref{prop:perp-to-ord-seq-union} that the values of $f_{x, \gamma}\mleft(y\mright)$ and $f_{y, \gamma}\mleft(y\mright)$ only depend on the values of $f_{x, \delta}\mleft(z\mright)$ and $f_{y, \delta}\mleft(z\mright)$ respectively for any $\delta \in \gamma$ and $z \in y$ such that $z \in \tc{x}_\delta$. We again use an induction on $y$, and notice that the inductive hypothesis implies that for such $\delta$ and $z$ we always have
  \[f_{x, \delta}\mleft(z\mright) = f_{z, \delta}\mleft(z\mright) = f_{y, \delta}\mleft(z\mright),\]
  thus $f_{x, \gamma}\mleft(y\mright) = f_{y, \gamma}\mleft(y\mright)$ as well.
\end{proof}

In other words, the value $f_{x, \gamma}\mleft(y\mright)$ is determined by $y$ and the ordinal parameters $\alpha, \beta, \gamma$ alone. We can finally prove the main result from the introduction:

\edef\thmtmp{\thetheorem}
\setcounter{theorem}{1}
\begin{theorem}
  Assume there exist $\alpha \perp \beta \in \Ord$, then the following are equivalent:
  \begin{itemize}
    \item $\Ord \subseteq L$,
    \item $V = L$.
  \end{itemize}
\end{theorem}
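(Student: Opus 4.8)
The plan is to prove the two implications separately; the direction from $V = L$ to $\Ord \subseteq L$ is immediate, since ordinals are sets. For the converse, assume $\Ord \subseteq L$ (so in particular $\alpha, \beta \in L$ and, being a conjunction of negated $\Delta_0$ statements, $\alpha \perp \beta$ holds in $L$ as well); I would then show $\forall x\ x \in L$ by set induction on $x$. So fix a set $x$ and assume as inductive hypothesis that $z \in L$ for every $z \in x$; the goal is $x \in L$.

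The first step is to trap $x$ inside a single level of the constructible hierarchy that $L$ itself recognises. Since each $z \in x$ lies in some $L_\delta$, strong $\Sigma$-collection produces a set $s \subseteq \Ord$ with $\forall z \in x\ \exists \delta \in s\ z \in L_\delta$; setting $\rho = \bigcup_{\delta \in s} \delta^+$ and using monotonicity of the $L$-hierarchy gives $x \subseteq L_\rho$. Now $\rho \in \Ord \subseteq L$, and since $L$ satisfies every axiom of $\mathrm{IKP}$ and the $L$-hierarchy is absolute, $L_\rho \in L_{\rho^+} \subseteq L$. Thus $x$ is a subset of the set $L_\rho$, which is a member of $L$ — this is the crucial point that will let the decoding below be performed inside $L$, even though $\tc{x}$ itself is not yet known to be in $L$.

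The second step is to run the construction of the previous section with $L_\rho$ in place of the arbitrary set. By \autoref{lem:tc-hier-contains-all} choose $\gamma \in \Ord$ with $L_\rho \in \tc{L_\rho}_\gamma$, so that (by transitivity of each $\tc{L_\rho}_\alpha$) $\tc{L_\rho}_\gamma = \tc{L_\rho}$, and $f \colonequals f_{L_\rho, \gamma}$ is a pairwise incomparable function with domain $\tc{L_\rho} \supseteq L_\rho \supseteq x$. Put $e = \bigcup_{t \in x} f\mleft(t\mright)^+ \in \Ord$. Applying \autoref{prop:pairwise-perp-powerset-ord} with $\tc{L_\rho}$ as the domain and $x$ as the subset, and restricting the resulting biconditional to $z \in L_\rho$, we obtain
\[x = \left\{z \in L_\rho : f\mleft(z\mright) \in e\right\}.\]
Here $\alpha, \beta, \gamma, e$ all lie in $\Ord \subseteq L$ and $L_\rho \in L$; moreover $f = f_{L_\rho, \gamma}$ is $\Sigma$-definable from these parameters, and by the established absoluteness $f_{L_\rho, \gamma}^L = f_{L_\rho, \gamma}$, so $f \in L$ and $\tc{L_\rho}^L = \tc{L_\rho}$. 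Since $\mathrm{IKP}$, and hence $L$, proves separation for $\Delta_0$-formulae over $\Sigma$-definable class functions, the set $\left\{z \in L_\rho : f\mleft(z\mright) \in e\right\}$ exists in $L$; being defined by an absolute formula from parameters in $L$, it agrees with the same set computed in $V$, namely $x$. Therefore $x \in L$, which closes the induction.

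I expect the main obstacle to be organisational rather than conceptual. The genuinely essential move is the second paragraph: one must realise that $x$ should be reconstructed as a $\Delta_0$-separation subset of $L_\rho$ — a set placed inside $L$ by the inductive hypothesis together with collection and $\Ord \subseteq L$ — and not as a subset of $\tc{x}$, about which nothing is yet known. The remaining care is in bookkeeping the absoluteness claims: $f_{x,\gamma}^L = f_{x,\gamma}$, absoluteness of $\tc{\cdot}$ and of the $L$-hierarchy, and \autoref{lem:tc-hier-and-f-agree}, which guarantees that the values of $f$ do not secretly depend on the ambient set, so that the decoding carried out inside $L$ really returns $x$ and not some smaller set.
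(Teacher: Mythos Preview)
Your argument is correct and follows the same architecture as the paper's proof: set induction on $x$, trapping $x$ inside some $L_\rho$ via the inductive hypothesis and collection, encoding $x$ as an ordinal through a pairwise incomparable function, and then recovering $x$ inside $L$ by $\Delta_0$-separation from parameters in $L$.

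The one genuine difference is \emph{which} pairwise incomparable function you use. The paper works with the ``diagonal'' values $f_{t,\gamma}\mleft(t\mright)$ for the bound variable $t$, and therefore needs \autoref{lem:tc-hier-and-f-agree} twice: once to know that $f_{x,\gamma}\mleft(y\mright) = f_{y,\gamma}\mleft(y\mright) = f_{y,\gamma}^L\mleft(y\mright)$ for $y \in x$, and once more (via the auxiliary pair $p = \{t,y\}$) to compare $f_{t,\gamma}\mleft(t\mright)$ with $f_{y,\gamma}\mleft(y\mright)$ inside a common domain when showing $z \subseteq x$. You instead observe that $L_\rho$ itself is already a member of $L$, so the single function $f_{L_\rho,\gamma} = f_{L_\rho,\gamma}^L$ is a \emph{set} in $L$ and can be used directly as a parameter in plain $\Delta_0$-separation; Proposition~\ref{prop:pairwise-perp-powerset-ord} then finishes the job without any appeal to \autoref{lem:tc-hier-and-f-agree}. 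Your closing mention of that lemma is therefore superfluous --- your argument as written never actually invokes it. What your route buys is a shorter proof that bypasses \autoref{lem:tc-hier-and-f-agree} entirely; what the paper's route buys is the independently interesting fact that the encoding value $f_{x,\gamma}\mleft(y\mright)$ depends only on $y$ and the ordinal parameters, not on the ambient set $x$.
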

\setcounter{theorem}{\thmtmp}

\begin{proof}
  The backward direction is trivial. For the forward direction, we use set induction on $x$ to show that $\forall x \ x \in L$.

  From the inductive hypothesis, we will know that $x \subseteq L$, so by strong $\Sigma$-collection we have a set $s \subseteq \Ord$ such that $\forall y \in x \ \exists \gamma \in s \ y \in L_\gamma$. Take $\sigma = \bigcup_{\gamma \in s} \gamma$, then $x \subseteq L_\sigma$.

  Now, by \autoref{lem:tc-hier-contains-all}, there is some $\gamma \in \Ord$ such that $x \in \tc{x}_{\gamma^+}$ (and we know $\gamma \in L$ from our assumption). For any $y \in x$, we then know from \autoref{lem:tc-hier-and-f-agree} that $y \in \tc{y}_\gamma$ and
  \[f_{x, \gamma}\mleft(y\mright) = f_{y, \gamma}\mleft(y\mright) = f_{y, \gamma}^L\mleft(y\mright).\]
  We can now consider the set
  \[z = \left\{t \in L_\sigma : t \in \tc{t}_\gamma \land f_{t, \gamma}^L\mleft(t\mright) \in \tau\right\}\]
  where $\tau = \bigcup_{y \in x} f_{y, \gamma}\mleft(y\mright)^+$ is an ordinal. Assuming that all ordinals are in $L$, then by $\Delta_0$-separation, $z \in L$.

  It is immediate that $x \subseteq z$. For any $t \in z$, there must exist some $y \in x$ such that $f_{t, \gamma}\mleft(t\mright) \in f_{y, \gamma}\mleft(y\mright)^+$. Consider the set $p = \left\{t, y\right\}$ then we have $t, y \in \tc{p}_\gamma$ by \autoref{lem:tc-hier-and-f-agree} and
  \[f_{p, \gamma}\mleft(t\mright) = f_{t, \gamma}\mleft(t\mright) \in f_{y, \gamma}\mleft(y\mright)^+ = \bigcup_{s \in \left\{y\right\}} f_{p, \gamma}\mleft(s\mright)^+.\]
  Here $f_{p, \gamma} : \tc{p}_\gamma \rightarrow \Ord$ is pairwise incomparable and by \autoref{prop:pairwise-perp-powerset-ord}, this implies $t \in \left\{y\right\}$, i.e.\ $t = y \in x$. It follows that $x = z \in L$ as desired.
\end{proof}

\section{A final remark}

Using the same constructions as in \autoref{prop:perp-consistent}, we observe that \autoref{thm:main-equiv} implies finding any non-classical model of $\mathrm{IZF} + V \neq L$ would immediate disprove $\mathrm{IZF} \vdash \Ord \subseteq L$. More precisely,

\begin{corollary}
  Let $\varphi$ be any anti-classical axiom, that is, $\mathrm{ZF} \vdash \neg \varphi$. If $\mathrm{IZF} + \varphi + V \neq L$ is consistent, then $\mathrm{IZF} \nvdash \Ord \subseteq L$.
\end{corollary}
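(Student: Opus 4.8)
The plan is to argue by contraposition: assuming $\mathrm{IZF} \vdash \Ord \subseteq L$, I will show that $\mathrm{IZF} + \varphi + V \neq L$ is inconsistent, contradicting the stated hypothesis. The starting observation is the standard fact that, since $\mathrm{ZF}$ is $\mathrm{IZF}$ augmented with every instance of excluded middle and $\mathrm{ZF} \vdash \neg \varphi$, any single derivation of $\neg \varphi$ invokes only finitely many such instances; so we may fix sentences $\chi_1, \ldots, \chi_k$, each of the form $\forall x_1, \ldots, x_n\,(\psi_i \lor \neg \psi_i)$, with $\mathrm{IZF} + \chi_1 + \cdots + \chi_k \vdash \neg \varphi$.

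Next I would revisit the definable sets $\alpha_{\psi_i}$ from the proof of \autoref{prop:perp-consistent} and push its bookkeeping one step further. There one shows that $\mathrm{IZF}$ proves $\alpha_{\psi_i} \in \Ord$, $\neg \alpha_{\psi_i} \in 2$, $\neg 2 \in \alpha_{\psi_i}$ and $\alpha_{\psi_i} = 2 \leftrightarrow \chi_i$; unwinding $2^+ = 3$ and $\alpha_{\psi_i}^+$, and using the two negated membership facts to collapse the resulting disjunctions, one obtains $\mathrm{IZF} \vdash (\alpha_{\psi_i} \perp 2) \leftrightarrow \neg \alpha_{\psi_i} = 2$, which is in turn equivalent to $\neg \chi_i$. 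Since each $\alpha_{\psi_i}$ denotes a specific, provably existing set, the sentence $\neg \exists \alpha, \beta \in \Ord\ \alpha \perp \beta$ entails $\neg(\alpha_{\psi_i} \perp 2)$, and hence $\neg \neg \chi_i$, for every $i$.

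Now I would work inside $\mathrm{IZF} + \varphi + V \neq L$, noting that $\mathrm{IZF}$ extends $\mathrm{IKP}$ and in particular carries the strong infinity and transitive-closure apparatus on which \autoref{thm:main-equiv} relies. From $\mathrm{IZF} \vdash \Ord \subseteq L$ we have $\Ord \subseteq L$, and since $V \neq L$ as well, \autoref{thm:main-equiv} --- in the form $\exists \alpha, \beta \in \Ord\,(\alpha \perp \beta) \to (\Ord \subseteq L \to V = L)$ --- forces $\neg \exists \alpha, \beta \in \Ord\ \alpha \perp \beta$, so by the previous paragraph $\neg \neg \chi_i$ holds for every $i$. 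On the other hand $\mathrm{IZF} + \chi_1 + \cdots + \chi_k \vdash \neg \varphi$ gives $\mathrm{IZF} \vdash (\chi_1 \land \cdots \land \chi_k) \to \neg \varphi$; applying double negation together with the intuitionistic distribution of $\neg \neg$ over finite conjunctions yields $\mathrm{IZF} \vdash (\neg \neg \chi_1 \land \cdots \land \neg \neg \chi_k) \to \neg \neg \neg \varphi$, and since $\neg \neg \neg \varphi$ collapses to $\neg \varphi$, our theory proves $\neg \varphi$. This contradicts $\varphi$, so $\mathrm{IZF} + \varphi + V \neq L$ is inconsistent, as required.

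I do not expect a substantive obstacle. Everything reduces to the coding already carried out in \autoref{prop:perp-consistent} together with \autoref{thm:main-equiv}, and the only points that need genuine care are purely logical: that finitely many double-negated premises suffice (which is exactly why one must cite $\mathrm{ZF} \vdash \neg \varphi$ rather than merely a scheme over $\mathrm{IZF}$), and the routine verification that $\mathrm{IZF}$ contains enough of $\mathrm{IKP}$ to license the invocation of \autoref{thm:main-equiv}.
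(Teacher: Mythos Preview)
Your proposal is correct and follows essentially the same approach as the paper's proof: assume $\mathrm{IZF} \vdash \Ord \subseteq L$, invoke \autoref{thm:main-equiv} to conclude that $\mathrm{IZF} + V \neq L$ proves there are no incomparable ordinals, and then reuse the $\alpha_{\psi_i}$-versus-$2$ analysis from \autoref{prop:perp-consistent} to derive each $\neg\neg\chi_i$ and hence $\neg\varphi$. You have simply made explicit the intuitionistic bookkeeping (the equivalence $\alpha_{\psi_i} \perp 2 \leftrightarrow \neg\chi_i$ and the distribution of double negation over the finite conjunction) that the paper leaves implicit in its reference back to \autoref{prop:perp-consistent}.
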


\begin{proof}
  We assume that $\mathrm{IZF} \vdash \Ord \subseteq L$. Then by \autoref{thm:main-equiv},
  \[\mathrm{IZF} + V \neq L \vdash \neg \exists \alpha, \beta \in \Ord \ \alpha \perp \beta.\]
  Looking at the same ordinals $\alpha_\psi$ and $2$ as in the proof of \autoref{prop:perp-consistent}, we would then conclude that $\mathrm{IZF} + V \neq L$ is inconsistent with any anti-classical axioms. This contradicts the consistency of $\mathrm{IZF} + \varphi + V \neq L$.
\end{proof}

However, constructing intuitionistic models of $V \neq L$ has proven to be very difficult, precisely due to the complicated structure of ordinals and G\"odel's $L$ discussed in this paper, and no such consistency results have been known to the author. We hope that further research will be able to settle the problem regarding $\Ord \subseteq L$ in intuitionistic set theories in this direction.

\section*{Funding}\normalfont

During the preparation of this paper the author is supported by the UK Engineering and Physical Sciences Research Council [EP/W523860/1].

\bibliographystyle{asl}
\bibliography{\jobname}

\end{document}